\title{A new characterisation of the Fermat curve}
\author{Satoru Fukasawa}
\subjclass[2020]{14H37, 14H50}
\keywords{Fermat curve, Galois point, automorphism group, Hurwitz's theorem}
\thanks{The author was partially supported by JSPS KAKENHI Grant Numbers JP19K03438 and JP22K03223}
\address{Faculty of Science, Yamagata University, Kojirakawa-machi 1-4-12, Yamagata 990-8560, Japan}
\email{s.fukasawa@sci.kj.yamagata-u.ac.jp}
\newtheorem*{theorem}{Theorem}
\newtheorem{proposition}{Proposition}
\newtheorem{corollary}{Corollary}
\newtheorem{fact}{Fact}
\theoremstyle{definition}
\begin{document}
\begin{abstract}
This paper presents a new characterisation of the Fermat curve, according to the arrangement of Galois points. 
\end{abstract}
\maketitle

\section{Introduction} 
Let $k$ be an algebraically closed field of characteristic zero and let $C \subset \mathbb{P}^2$ be an irreducible plane curve of degree $d \ge 3$ over $k$. 
The genus of the smooth model of $C$ is denoted by $g$. 
This paper proves the following. 

\begin{theorem}
Assume that $g \ge 1$. 
Then there exist non-collinear outer Galois points $P_1, P_2, P_3 \in \mathbb{P}^2 \setminus C$ for $C$ if and only if $C$ is projectively equivalent to the Fermat curve 
$$ X^d+Y^d+Z^d=0, $$
where $(X:Y:Z)$ is a system of homogeneous coordinates of $\mathbb{P}^2$. 
\end{theorem}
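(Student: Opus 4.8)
The \emph{if} direction is a direct computation: for the Fermat curve the projection from $(0:0:1)$ is $(X:Y:Z)\mapsto(X:Y)$, whose fibre over $(a:b)$ is $\{(a:b:z):z^{d}=-(a^{d}+b^{d})\}$, and the map $(X:Y:Z)\mapsto(X:Y:\zeta Z)$ with $\zeta$ a primitive $d$-th root of unity generates a cyclic group of order $d$ of deck transformations; hence $(0:0:1)$, and by symmetry $(1:0:0)$ and $(0:1:0)$, are outer Galois points, and they are the non-collinear vertices of the coordinate triangle. So I concentrate on the converse.

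The first step is to describe $G_{P}$ for an outer Galois point $P$. An element of $G_{P}$ permutes the points cut on $C$ by each line through $P$, so it fixes $P$ and every line of the pencil through $P$; choosing coordinates with $P=(0:0:1)$ these are exactly the transformations $(X:Y:Z)\mapsto(X:Y:bX+cY+\mu Z)$, which form a group isomorphic to $k^{2}\rtimes k^{\times}$. Since $\mathrm{char}\,k=0$ the additive part is torsion free, so every finite subgroup injects into $k^{\times}$ and is therefore cyclic; thus $G_{P}=\langle\sigma_{P}\rangle$ with $\sigma_{P}$ a homology of order $d$ centred at $P$, whose axis $\ell_{P}$ is its line of fixed points. (Verifying that the Galois group really is realised by such projective transformations, also when $C$ is singular, is the technical input I would isolate as a preliminary proposition.) Normalising at $P_{3}$ and using $\sigma_{3}$-invariance of the defining polynomial then puts $C$ in the shape $Z^{d}+a_{d}(X,Y)=0$, with $\ell_{3}=\{Z=0\}$ and $C\cap\ell_{3}$ the (totally ramified) branch locus of the projection from $P_{3}$.

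The heart of the proof is to show that the three homologies pairwise commute, equivalently that each centre lies on the axes of the other two (the ``triangle incidence'' $P_{i}\in\ell_{j}$ for $i\neq j$). I would argue one pair at a time. The line $m=\overline{P_{1}P_{3}}$ passes through both centres, so it is invariant under $\sigma_{1}$ and $\sigma_{3}$, and restriction to $m$ gives a homomorphism from the finite group $\langle\sigma_{1},\sigma_{3}\rangle\subseteq\mathrm{Aut}(\widetilde C)$ (finite by Hurwitz's theorem when $g\ge2$, and by finiteness of the linear automorphism group when $g=1$) to $\mathrm{PGL}_{2}$. The images $\sigma_{1}|_{m}$ and $\sigma_{3}|_{m}$ are non-trivial of order $d$, with fixed-point pairs $\{P_{1},m\cap\ell_{1}\}$ and $\{P_{3},m\cap\ell_{3}\}$. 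For $d\ge6$ a finite subgroup of $\mathrm{PGL}_{2}$ containing an element of order $d$ must be cyclic or dihedral, and in either case all its elements of order $d$ share one fixed pair; comparing the two pairs and using $P_{1}\neq P_{3}$ (together with $P_{3}\notin\ell_{3}$) forces $P_{1}=m\cap\ell_{3}\in\ell_{3}$ and, symmetrically, $P_{3}\in\ell_{1}$. Hence $\sigma_{1}$ and $\sigma_{3}$ commute, and likewise for the other two pairs.

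Once the three homologies commute they are simultaneously diagonalisable (each is semisimple of order $d$), and in a common eigenbasis they become $\mathrm{diag}(\zeta,1,1)$, $\mathrm{diag}(1,\zeta,1)$, $\mathrm{diag}(1,1,\zeta)$, their distinct non-collinear centres being the coordinate vertices. Imposing invariance of $F$ under two independent such diagonal homologies annihilates every monomial except $X^{d},Y^{d},Z^{d}$, so $F=c_{1}X^{d}+c_{2}Y^{d}+c_{3}Z^{d}$; irreducibility forces all $c_{i}\neq0$, and rescaling the coordinates yields the Fermat curve. The step I expect to be the real obstacle is the commutation (the triangle incidence): the clean $\mathrm{PGL}_{2}$ argument only settles $d\ge6$, and the exceptional subgroups $A_{4},S_{4},A_{5}$ that can occur for $d\in\{3,4,5\}$ must be excluded using the third, non-collinear, Galois point --- which is exactly where non-collinearity becomes indispensable --- while for singular $C$ one must first secure the finiteness and, above all, the projectivity of the Galois groups before the restriction argument applies.
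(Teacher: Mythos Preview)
Your strategy hinges on the assertion that each $G_{P_i}$ acts on $C$ by projective linear transformations (homologies with centre $P_i$). You flag this as ``technical input'', but for singular $C$ it is precisely the heart of the difficulty, not a preliminary. An element $\sigma\in G_{P}$ is an automorphism of the smooth model $X$; it extends to $PGL(3,k)$ only if it preserves the $3$-dimensional linear system $V\subset H^0(X,L)$ giving the birational map $X\to C\subset\mathbb{P}^2$. When $C$ is singular this $g^2_d$ need not be complete, and there is no general mechanism forcing $\sigma^*V=V$ (all one knows is that $\sigma$ preserves the pencil of lines through $P$, hence $\sigma^*L\cong L$). The paper's Proposition~\ref{extendable} establishes linearity only for the subgroup $\langle G_{P_1},G_{P_2}\rangle\cap G_{P_3}$, by a careful divisor argument using the extra Galois points; it never claims linearity for all of $G_{P_3}$, and the entire architecture of the paper---Hurwitz bounds, orbit counting, the case analysis on the number of short orbits, the separate reduction to $d<6$ and the ad hoc treatment of $d=4$---is designed to work with abstract subgroups of $\mathrm{Aut}(X)$ precisely because the homology description is unavailable. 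Your finiteness argument for $g=1$ is likewise circular: you invoke ``finiteness of the linear automorphism group'', but that presupposes the linearity you have not proved (the paper instead gets finiteness from the action on the finite set $C\cap\overline{P_iP_j}$ together with finiteness of point-stabilisers on an elliptic curve).

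Even granting linearity, your $PGL_2$ restriction argument only settles $d\ge 6$, and you explicitly leave $d\in\{3,4,5\}$ to an unspecified exclusion of $A_4,S_4,A_5$. The paper handles $d$ prime (so $3$ and $5$) via the cited result of Duyaguit--Miura and devotes an entire section to $d=4$ using hyperelliptic and elliptic-curve structure; these are not residual details. In short, the route you sketch would be elegant if the homology description were available, but establishing that description for singular $C$ is the problem, and your proposal does not address it.
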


We recall the definition of a {\it Galois point}, which was introduced by Hisao Yoshihara in 1996 (\cite{fukasawa1, miura-yoshihara, yoshihara}). 
We consider the projection 
$$\pi_P: C \rightarrow \mathbb{P}^1; \ Q \mapsto (F_0(Q):F_1(Q))$$
from a point $P \in \mathbb{P}^2 \setminus C$, where $F_0, F_1$ are homogeneous polynomials of degree one defining $P$. 
The point $P \in \mathbb{P}^2 \setminus C$ is called an outer Galois point for $C$ if the function field extension $k(C)/\pi_P^*k(\mathbb{P}^1)$ induced by $\pi_P$ is Galois. 
Obviously, non-collinear points $(1:0:0)$, $(0:1:0)$ and $(0:0:1)$ are outer Galois points for the Fermat curve, namely, the ``if part'' of Theorem is confirmed.

\section{Preliminaries}

For different points $P, Q \in \mathbb{P}^2$, the line passing through $P$ and $Q$ is denoted by $\overline{PQ}$. 
Let $X$ be the smooth model of $C$ and let $g$ be the genus of $X$. 
In this paper, we assume that $g \ge 1$. 
The following well known fact on Galois extensions is used frequently (see, for example, \cite[III.7.1, III.8.2]{stichtenoth}). 

\begin{fact}
Let $\pi: X \rightarrow X'$ be a surjective morphism between smooth projective curves such that $k(X)/\pi^*k(X')$ is a Galois extension, and let $G \subset {\rm Aut}(X)$ be the induced Galois group. 
Then the following hold. 
\begin{itemize} 
\item[(a)] The Galois group $G$ acts on any fibre transitively. 
\item[(b)] If $Q \in X$, then $e_Q=|G(Q)|$, where $e_Q$ is a ramification index at $Q$, and $G(Q)$ is the stabiliser subgroup of $Q$ under $G$. 
\end{itemize}
\end{fact}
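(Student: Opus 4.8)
The plan is to translate both assertions into the language of valuations of the function field and then to combine the norm map with the approximation theorem. First I would identify ${\rm Aut}(X)$ with the group ${\rm Aut}_k(k(X))$ of $k$-automorphisms of the function field; under this identification $G={\rm Gal}(k(X)/\pi^*k(X'))$ consists exactly of those $\sigma$ fixing $\pi^*k(X')$ pointwise, equivalently satisfying $\pi\circ\sigma=\pi$. In particular every $\sigma\in G$ maps each fibre $\pi^{-1}(P')$ to itself, so the action on fibres makes sense. Writing $v_Q$ for the normalised valuation of $k(X)$ at a point $Q$, the points of $\pi^{-1}(P')$ are precisely the valuations of $k(X)$ lying over the valuation $v_{P'}$ of $k(X')$, and for $\sigma\in G$ and $f\in k(X)$ one has $v_Q(f\circ\sigma)=v_{\sigma(Q)}(f)$. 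Two standard inputs will be used freely: the approximation theorem, allowing one to prescribe the orders of a function at finitely many points, and the fundamental identity $\sum e_{Q_i}f_{Q_i}=\deg\pi$ for the fibre over a point; since $k$ is algebraically closed, every residue degree $f_{Q_i}$ equals $1$.

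For part (a) I would argue by contradiction, supposing that $Q_1,Q_2\in\pi^{-1}(P')$ lie in distinct $G$-orbits. The orbits $G\cdot Q_1$ and $G\cdot Q_2$ are then disjoint finite sets of points, so the approximation theorem provides $z\in k(X)$ with $v_{Q_2}(z)=1$ and $v_Q(z)=0$ for every other $Q\in (G\cdot Q_1)\cup(G\cdot Q_2)$. I would then consider the norm $N(z)=\prod_{\sigma\in G}(z\circ\sigma)$, which lies in $\pi^*k(X')$ because it is $G$-invariant. Using $v_Q(z\circ\sigma)=v_{\sigma(Q)}(z)$ and the fact that $\sigma\mapsto\sigma(Q_i)$ is $|G(Q_i)|$-to-one onto $G\cdot Q_i$, one computes $v_{Q_1}(N(z))=|G(Q_1)|\sum_{P\in G\cdot Q_1}v_P(z)=0$ and $v_{Q_2}(N(z))=|G(Q_2)|\sum_{P\in G\cdot Q_2}v_P(z)=|G(Q_2)|>0$. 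But $N(z)=\pi^*g$ for some $g\in k(X')$, and since $Q_1,Q_2$ both lie over $P'$ one has $v_{Q_i}(N(z))=e_{Q_i}\,v_{P'}(g)$ with $e_{Q_i}>0$; hence the two orders must agree in sign. This contradiction shows that $G$ acts transitively on $\pi^{-1}(P')$.

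For part (b) I would fix $Q$, set $P'=\pi(Q)$ and let $r=|\pi^{-1}(P')|$. By part (a) the fibre is a single $G$-orbit, so orbit--stabiliser gives $r\,|G(Q)|=|G|=\deg\pi$. Moreover, any $\sigma\in G$ carrying $Q_i$ to $Q_j$ is an automorphism of $X$ commuting with $\pi$, hence a local isomorphism taking the branch at $Q_i$ to the branch at $Q_j$ over $P'$; therefore all ramification indices $e_{Q_i}$ in the fibre share a common value $e$. Feeding $f_{Q_i}=1$ into the fundamental identity yields $re=\deg\pi=|G|$, and comparing the two expressions for $|G|$ gives $e_Q=e=|G(Q)|$.

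I expect the substantive step to be part (a); part (b) is then bookkeeping through orbit--stabiliser and the fundamental identity, whose only non-formal ingredients are transitivity and the vanishing of residue degrees. Within (a) the delicate points are the use of the approximation theorem to prescribe exact orders on the two disjoint orbits simultaneously, and the observation that the norm lands in the base field, which forces its orders at any two points above $P'$ to be positive multiples of the single integer $v_{P'}(g)$ and thus to agree in sign -- the precise feature that the distinct-orbit assumption violates.
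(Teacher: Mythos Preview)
Your argument is correct and is essentially the standard proof found in the reference the paper cites (Stichtenoth, III.7.1 and III.8.2): the paper itself does not supply a proof of this fact, treating it as a known background result. The only minor point worth noting is that in part~(b) you tacitly use $|G|=[k(X):\pi^*k(X')]=\deg\pi$, which is of course immediate from the definition of the Galois group, but you might want to make that explicit.
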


A well known theorem of Hurwitz (and its proof) on the order of the automorphism group plays an important role in this paper (see, for example, \cite[Theorem 11.56]{hkt}). 
The following version is needed later. 

\begin{fact}[Hurwitz's theorem] \label{Hurwitz} 
Let $G \subset {\rm Aut}(X)$. 
Assume that $g \ge 2$ and $X/G \cong \mathbb{P}^1$. 
The number of short orbits under $G$ is denoted by $n$, and the orders of their stabiliser subgroups are denoted by $r_1, \ldots, r_n$.  
Let 
$$T:=-2+\sum_{i=1}^n\left (1-\frac{1}{r_i}\right). $$
Then the following hold. 
\begin{itemize} 
\item[(a)] $T>0$ and $|G|=(2g-2)/T$. 
\item[(b)] $n \ge 3$. 
\item[(c)] If $n \ge 5$, then $|G| \le 4(g-1)$. 
\end{itemize} 
\end{fact}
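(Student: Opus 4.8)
The plan is to derive all three statements from the Riemann--Hurwitz formula applied to the quotient map $\pi: X \to X/G \cong \mathbb{P}^1$, combined with the orbit--stabiliser description of ramification supplied by the preceding Fact. Since the base field has characteristic zero, every ramification is tame, so there is no wild contribution to keep track of, and the ramification index at a point equals the order of its stabiliser by part (b) of that Fact.

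First I would set $N := |G|$ and analyse the fibres of $\pi$. A point $Q \in X$ lies in a short orbit precisely when its stabiliser is nontrivial, and by part (a) the group $G$ acts transitively on the fibre through $Q$; hence every point of a short orbit $O_i$ shares the same stabiliser order $r_i \ge 2$, the orbit consists of exactly $N/r_i$ points, and each such point has ramification index $r_i$. Therefore $O_i$ contributes $(N/r_i)(r_i-1) = N(1 - 1/r_i)$ to the total ramification $\sum_Q (e_Q - 1)$, while the free orbits contribute nothing. Feeding this into Riemann--Hurwitz, $2g-2 = N(2\cdot 0 - 2) + \sum_Q (e_Q - 1)$, gives
$$2g-2 = -2N + \sum_{i=1}^n N\left(1 - \frac{1}{r_i}\right) = N\cdot T.$$
Since $g \ge 2$ forces $2g-2 > 0$ and $N > 0$, this immediately yields $T > 0$ and $|G| = (2g-2)/T$, which is exactly (a).

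The remaining two parts are then elementary estimates on $T$, using only that each $r_i \ge 2$, so that $\tfrac{1}{2} \le 1 - \tfrac{1}{r_i} < 1$. For (b), if $n \le 2$ then $\sum_{i=1}^n (1 - 1/r_i) < 2$, forcing $T < 0$ and contradicting (a); hence $n \ge 3$. For (c), when $n \ge 5$ the lower bound $1 - 1/r_i \ge \tfrac{1}{2}$ gives $\sum_{i=1}^n (1 - 1/r_i) \ge n/2 \ge 5/2$, so that $T \ge \tfrac{1}{2}$; substituting into $|G| = (2g-2)/T = 2(g-1)/T$ then yields $|G| \le 4(g-1)$.

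I do not expect a genuine obstacle here, since the analytic content is carried entirely by Riemann--Hurwitz and this is a classical result. The only points requiring care are the combinatorial bookkeeping of orbit sizes (that a short orbit has $N/r_i$ points, each of ramification index $r_i$) and the observation that stabilisers on short orbits have order at least $2$, which is what makes the arithmetic inequalities for (b) and (c) go through.
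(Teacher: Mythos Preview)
Your argument is correct and is the standard derivation of this version of Hurwitz's theorem: Riemann--Hurwitz for the quotient map $X \to X/G \cong \mathbb{P}^1$, together with the orbit--stabiliser bookkeeping for the ramification, gives $2g-2 = |G|\cdot T$, from which (a) follows immediately, and (b) and (c) are the obvious arithmetic consequences of $r_i \ge 2$.

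The paper itself does not supply a proof of this statement. It is recorded as a Fact with a reference to \cite[Theorem 11.56]{hkt}, and is used as a black box throughout the argument. So there is no ``paper's own proof'' to compare against; your write-up is exactly the classical justification one would give if asked to unpack that citation.
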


For an outer Galois point $P \in \mathbb{P}^2 \setminus C$, the induced Galois group is denoted by $G_P$, which admits an injective homomorphism $G_P \hookrightarrow {\rm Aut}(X)$. 
For the case where two Galois points exist, the following holds. 

\begin{fact}[Lemma 7 in \cite{fukasawa2}] \label{trivial intersection}
Let $P_1, P_2$ be different outer Galois points. 
Then 
$$ G_{P_1} \cap G_{P_2}=\{1\}. $$
\end{fact}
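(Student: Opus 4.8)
The plan is to realise the Galois groups by projective transformations and then exploit the two pencils of lines through $P_1$ and through $P_2$. Recall the standard property of outer Galois points (\cite{miura-yoshihara, yoshihara}): for an outer Galois point $P$, every element of $G_P$ is induced by a projective transformation of $\mathbb{P}^2$ that preserves $C$ and fixes $P$. I would first record the geometric consequence that each such transformation fixes every line through $P$ as a set. Indeed, if $\sigma \in G_{P_1}$ is induced by $\tilde{\sigma} \in \mathrm{PGL}_3(k)$, then $\sigma$ is a deck transformation of $\pi_{P_1}$, so the points of $C$ over a common point of $\mathbb{P}^1$, which are exactly the points of $C$ lying on a fixed line $\ell$ through $P_1$, are permuted among themselves by $\tilde{\sigma}$. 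A general such $\ell$ meets $C$ in at least two points (as $d \ge 3$), so the line $\tilde{\sigma}(\ell)$, which passes through $\tilde{\sigma}(P_1)=P_1$, shares at least two points with $\ell$; hence $\tilde{\sigma}(\ell)=\ell$. Since the set of lines fixed by $\tilde{\sigma}$ is closed in the pencil through $P_1$ and contains a dense open subset, $\tilde{\sigma}$ fixes every line through $P_1$, and symmetrically every line through $P_2$.

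Next I would take an arbitrary $\sigma \in G_{P_1} \cap G_{P_2}$ and show $\sigma = 1$. Let $\tilde{\sigma} \in \mathrm{PGL}_3(k)$ be the induced transformation. By the previous step $\tilde{\sigma}$ fixes, as a set, every line through $P_1$ and every line through $P_2$. Fix any point $Q \in \mathbb{P}^2 \setminus \overline{P_1P_2}$. Then $\overline{P_1Q}$ and $\overline{P_2Q}$ are two distinct lines meeting only in $Q$, both preserved by $\tilde{\sigma}$, so $\tilde{\sigma}(Q) \in \overline{P_1Q} \cap \overline{P_2Q} = \{Q\}$; that is, $\tilde{\sigma}(Q)=Q$. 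Hence $\tilde{\sigma}$ fixes pointwise the dense open set $\mathbb{P}^2 \setminus \overline{P_1P_2}$, and a projective transformation fixing a nonempty open subset pointwise is the identity. Therefore $\tilde{\sigma}=\mathrm{id}$, so $\sigma$ acts trivially on $X$ and $\sigma=1$ in $\mathrm{Aut}(X)$. This gives $G_{P_1} \cap G_{P_2} = \{1\}$.

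The elementary double-pencil argument is the easy part; the crux is the realisation of $G_P$ by projective transformations fixing the pencil of lines through $P$. I would treat that as the main input, supplying it either by citing the standard theory of Galois points or by the linear-system argument: the pencil of lines through $P$ cuts on $C$ a one-dimensional subsystem of the two-dimensional linear system defining the plane model $C \subset \mathbb{P}^2$; a deck transformation fixes every member of this pencil, and from this one deduces that it preserves the whole embedding system and hence extends to $\mathrm{PGL}_3(k)$. This realisation step is where the real content lies; once it is available, the trivial intersection follows immediately from the geometry of two distinct pencils.
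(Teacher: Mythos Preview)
The paper does not prove this statement; it is quoted without argument as Lemma~7 of \cite{fukasawa2}. So there is no in-paper proof to compare against, and the question is whether your proposal stands on its own.

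Your double-pencil argument is correct once the $\mathrm{PGL}(3,k)$ realisation of $G_P$ is in hand, but you have made the proof harder than necessary by routing through that realisation, and the step you yourself flag as the ``crux'' is not actually closed by your linear-system sketch. Knowing that a deck transformation $\sigma\in G_P$ fixes every divisor in the pencil of lines through $P$ tells you that $\sigma$ acts on the corresponding two-dimensional subspace of sections, but gives no immediate control over the third section defining the plane model. Compare the paper's own Proposition~\ref{extendable}, where exactly this point --- showing $\sigma^*g=a(\sigma)g$ for a function $g$ \emph{outside} the pencil --- requires producing an additional $\sigma$-invariant divisor. For singular $C$ the embedding linear system need not be complete, so this is a genuine issue rather than a formality; the references \cite{miura-yoshihara, yoshihara} you invoke treat smooth curves.

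The clean fix is to drop the $\mathrm{PGL}$ step entirely and run your two-pencil idea directly on the smooth model. For $\sigma\in G_{P_1}\cap G_{P_2}$ and any $Q\in X$ whose image $R=\nu(Q)\in C$ is a smooth point not on $\overline{P_1P_2}$, the deck-transformation property for each projection gives $\nu(\sigma(Q))\in\overline{P_1R}\cap\overline{P_2R}=\{R\}$, and smoothness of $R$ forces $\sigma(Q)=Q$. Such $Q$ form a Zariski-dense subset of $X$, so $\sigma=1$. This is precisely your geometric idea with the unnecessary detour removed.
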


Several important results in the theory of Galois points are needed for the proof. 

\begin{fact}[Yoshihara \cite{yoshihara}] \label{smooth}
If $C$ is smooth and there exist three outer Galois points, then $C$ is projectively equivalent to the Fermat curve. 
\end{fact}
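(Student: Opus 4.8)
The plan is to prove that three outer Galois points force a smooth $C$ to be projectively rigid, by controlling the finite set of \emph{total inflections} of $C$, i.e.\ the points $Q\in C$ whose tangent line meets $C$ only at $Q$ (necessarily with contact order $d$). Throughout I assume $d\ge 4$, so that $\mathrm{Aut}(C)\subset\mathrm{PGL}_3(k)$, every automorphism of a smooth plane curve of degree at least $4$ being linear; the cubic case is disposed of at the end. First I would normalise a single outer Galois point $P$. Since $\pi_P$ has degree $d$ and $G_P$ preserves each fibre, every element of $G_P$ fixes $P$ and fixes every line through $P$ setwise; writing $P=(1:0:0)$, such transformations form the group $\{(X:Y:Z)\mapsto(aX+bY+cZ:Y:Z)\}\cong\mathbb{G}_m\ltimes\mathbb{G}_a^2$, whose finite subgroups inject into $\mathbb{G}_m$ and are hence cyclic. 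Thus $G_P=\langle\sigma_P\rangle$ with $\sigma_P$ diagonalisable with eigenvalues $\zeta,1,1$ for a primitive $d$-th root of unity $\zeta$; after a coordinate change $\sigma_P=\mathrm{diag}(\zeta,1,1)$, a homology with centre $P$ and axis $\ell_P=\{X=0\}$. Because $e_Q=|G_P(Q)|$, the ramification points of $\pi_P$ are the totally ramified points $R_P:=C\cap\ell_P$, and $Q\in R_P$ means the line $\overline{PQ}$ meets $C$ only at $Q$; that is, each point of $R_P$ is a total inflection whose tangent passes through $P$.

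Next comes the decisive counting. If distinct outer Galois points $P,P'$ shared a point $Q\in R_P\cap R_{P'}$, then $Q$ would be totally ramified for both, and the cyclic groups $G_P,G_{P'}$ would each act faithfully on the one-dimensional space $T_QC$ as the full group $\mu_d$; comparing generators of the two actions produces a nontrivial element of $G_P\cap G_{P'}$, contradicting Fact \ref{trivial intersection}. Hence the sets $R_P$ are pairwise disjoint. On the other hand the total inflections lie on the Hessian curve, each contributing $d-2$ to the intersection number $3d(d-2)$, so $C$ has at most $3d$ of them. As each outer Galois point accounts for $d$ of them, there are at most three outer Galois points; by hypothesis there are exactly three, $P_1,P_2,P_3$, and the total inflections are exactly $R_1\sqcup R_2\sqcup R_3$.

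I would then read off the configuration and the equation. Each $\sigma_i$ is linear, so it carries outer Galois points to outer Galois points and therefore permutes $\{P_1,P_2,P_3\}$ while fixing $P_i$. A point off $\ell_i\cup\{P_i\}$ has $\sigma_i$-orbit of size $d\ge 3$, so $\sigma_i$ cannot transpose the remaining two centres; hence $\sigma_i$ fixes every $P_j$, forcing $P_j\in\ell_i$ for $j\ne i$. Consequently $\ell_1=\overline{P_2P_3}$, $\ell_2=\overline{P_1P_3}$, $\ell_3=\overline{P_1P_2}$, and since no centre lies on its own axis the three points are non-collinear. Taking $P_1,P_2,P_3$ as the coordinate vertices makes $\sigma_1,\sigma_2,\sigma_3$ the three coordinate homologies; invariance of the defining polynomial under $\sigma_1$ forces it to be $aX^d+g(Y,Z)$, and a further invariance under $\sigma_2$ forces $g=bY^d+cZ^d$, so $C$ is $aX^d+bY^d+cZ^d=0$ with $a,b,c\ne 0$ by smoothness, which rescales to $X^d+Y^d+Z^d=0$.

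The hard part is the configuration step: turning the abstract hypothesis ``three Galois points'' into the rigid triangle of centres and axes. This rests on the exact global count of total inflections (precisely $3d$), for which the genuinely technical input is the local intersection multiplicity $d-2$ of $C$ with its Hessian at each total inflection, together with the disjointness supplied by Fact \ref{trivial intersection}. Finally, for $d=3$ automorphisms need not be linear, but an order-$3$ deck transformation of $\pi_P$ fixes the points of $R_P$, and a genus-one curve admits an order-$3$ automorphism with a fixed point only when $j=0$; this already forces $C$ to be the Fermat cubic, so $d=3$ is subsumed.
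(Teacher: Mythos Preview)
The paper does not supply its own proof of this statement: it is quoted as a \emph{Fact} with a citation to Yoshihara \cite{yoshihara} and is used thereafter as a black box. So there is no in-paper proof to compare against; your proposal is in effect a reconstruction of Yoshihara's argument.

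Your reconstruction is correct and follows the classical route. The normalisation of a single outer Galois point to a diagonal homology (using that automorphisms of smooth plane curves of degree $\ge 4$ are linear, and that in characteristic $0$ finite subgroups of the affine group $\mathbb{G}_m\ltimes\mathbb{G}_a^2$ project injectively to $\mathbb{G}_m$) is exactly how one obtains the axis $\ell_P$ and the identification $R_P=C\cap\ell_P$ with a set of $d$ total inflections. The disjointness step is fine, though it could be phrased more sharply: in characteristic $0$ the stabiliser ${\rm Aut}(C)_Q$ injects into ${\rm GL}(T_QC)\cong k^\times$, so two order-$d$ subgroups of ${\rm Aut}(C)_Q$ must coincide; hence $G_P=G_{P'}$, contradicting Fact~\ref{trivial intersection}. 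The Hessian count $3d(d-2)$ with local contribution $d-2$ at a total inflection is the standard input and gives the sharp bound of $3d$ total inflections, hence at most three outer Galois points. Your configuration argument is also correct: since the fixed locus of every nontrivial power of $\sigma_i$ is $\ell_i\cup\{P_i\}$, any orbit of a point off $\ell_i\cup\{P_i\}$ under $\langle\sigma_i\rangle$ has exactly $d\ge 3$ elements, so the two remaining Galois points must lie on $\ell_i$; this even works for $d=3$ once linearity is known, so only the linearity assumption forces you to treat cubics separately. For $d=3$ your $j$-invariant reduction is valid, since two smooth plane cubics with the same $j$-invariant are projectively equivalent.

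In short: there is nothing in the paper to compare with, and your argument is a faithful and correct version of Yoshihara's proof.
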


\begin{fact}[Duyaguit--Miura \cite{duyaguit-miura}] \label{prime degree} 
If $g \ge 1$, $d$ is a prime and there exist three outer Galois points, then $C$ is projectively equivalent to the Fermat curve. 
\end{fact}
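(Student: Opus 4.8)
The plan is to use the primality of $d$ to force each Galois group to act on $\mathbb{P}^2$ by a linear homology, and then to read off the Fermat equation from the common invariants of three such homologies. At the outset I would dispose of two cases for free: if $C$ is smooth then Fact~\ref{smooth} already gives the conclusion, and an irreducible plane cubic with $g\ge 1$ is automatically smooth, so the case $d=3$ is covered. Hence I may assume $d\ge 5$.

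For each outer Galois point $P_i$ the projection $\pi_{P_i}$ has degree $d$, so $|G_{P_i}|=d$ and, $d$ being prime, $G_{P_i}=\langle\sigma_i\rangle$ is cyclic. Because the fibres of $\pi_{P_i}$ are the sections of $C$ by the lines of the pencil through $P_i$, every deck transformation preserves each such line; consequently $\sigma_i$ is induced by an element of $\mathrm{PGL}_3(k)$ fixing $P_i$ and acting as a scalar on that pencil. A finite-order transformation of this shape is diagonalisable with eigenvalues $(\zeta_i,1,1)$ for a primitive $d$-th root of unity $\zeta_i$; that is, $\sigma_i$ is a homology with centre $P_i$ and a pointwise-fixed axis $L_i$. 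Equivalently, in coordinates with $P_i=(0:0:1)$ the invariance of $C$ under $\sigma_i=\mathrm{diag}(1,1,\zeta_i)$ forces the Kummer normal form $F=G_i(X,Y)+\lambda_i Z^d$ with $\lambda_i\ne 0$, which I would keep as the concrete engine.

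The heart of the argument is to show that $\sigma_1,\sigma_2,\sigma_3$ can be \emph{simultaneously} diagonalised, i.e.\ that their centres are the vertices of a single coordinate triangle with each $L_i$ the opposite side. Consider the finite group $H=\langle\sigma_1,\sigma_2,\sigma_3\rangle\subset\mathrm{Aut}(X)\subset\mathrm{PGL}_3(k)$; by Fact~\ref{trivial intersection} the subgroups $G_{P_i}$ meet pairwise in $\{1\}$. Invoking the classification of finite subgroups of $\mathrm{PGL}_3(k)$, a primitive such group has order dividing $168$, $216$ or $360$, so among primes $\ge 5$ only $d=5$ or $d=7$ could divide $|H|$; one then checks that the order-$5$ elements of the Valentiner group and the order-$7$ elements of the Klein group act with three distinct eigenvalues and so are not homologies. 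Hence $H$ is imprimitive, and therefore monomial: it normalises its diagonal subgroup $D$, with $H/D$ embedding in $S_3$. Since $|\sigma_i|=d$ is coprime to $|H/D|\mid 6$, each $\sigma_i$ lies in $D$, so the $\sigma_i$ are simultaneously diagonal; their eigenlines of eigenvalue $\ne 1$ are the centres $P_1,P_2,P_3$, which are thus the three coordinate vertices (in particular non-collinear) with $L_i$ the opposite side. I expect this monomiality step --- the exclusion of the primitive groups --- to be the main obstacle; an alternative would be to force commutativity directly by matching the Kummer normal forms for $P_1$ and $P_2$ and ruling out, via a Riemann--Hurwitz count (Fact~\ref{Hurwitz}), the orbit of $d$ collinear Galois points that a non-fixed centre would produce.

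Finally, in the common coordinate system each $\sigma_i$ scales exactly one variable by a primitive $d$-th root of unity. Writing $F=\sum_{a+b+c=d}c_{abc}X^aY^bZ^c$, invariance of $C$ under all three homologies forces every monomial with $c_{abc}\ne 0$ to share the residues of $(a,b,c)$ modulo $d$; since $a+b+c=d$ with $0\le a,b,c\le d$, the only residue class containing more than one such monomial is $(0,0,0)$, which gives $\{X^d,Y^d,Z^d\}$. As $C$ is irreducible it cannot be a single monomial, so $F=c_1X^d+c_2Y^d+c_3Z^d$; the condition $P_i\notin C$ forces $c_1c_2c_3\ne 0$, and rescaling the coordinates yields $X^d+Y^d+Z^d=0$.
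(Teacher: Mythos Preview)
The paper does not prove this statement; it is quoted as a known result of Duyaguit--Miura and used as a black box. So there is no ``paper's own proof'' to compare against, and your proposal is an attempt to supply one.

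Your argument has a genuine gap at the very first substantive step. You write that each deck transformation $\sigma_i$ preserves every line of the pencil through $P_i$ and ``consequently $\sigma_i$ is induced by an element of $\mathrm{PGL}_3(k)$''. The consequence does not follow. What you have shown is that $\sigma_i$ fixes each divisor $C\cap\ell$ for $\ell\ni P_i$; this is a one--parameter family inside the linear system of hyperplane sections, not the whole system. To conclude that $\sigma_i$ is linear you would need $\sigma_i^*$ to preserve the three--dimensional space $\langle X,Y,Z\rangle\subset\mathcal{L}(D)$, equivalently that the third coordinate function is carried into that span. For a \emph{smooth} plane curve of degree $\ge 4$ the hyperplane system is complete and this is automatic; but you have already disposed of the smooth case by Fact~\ref{smooth}, so in the range where your argument is actually doing work the curve is singular, the hyperplane system on the normalisation need not be complete, and $\sigma_i$ need not even descend from $X$ to a self--map of $C\subset\mathbb{P}^2$. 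That this point is delicate is exactly why the present paper proves Proposition~\ref{extendable} so carefully: there, linearity is established only for the subgroup $\langle G_{P_1},G_{P_2}\rangle\cap G_{P_3}$, and only after invoking Corollary~\ref{transversal} to control the line $\overline{P_1P_2}$. Your Kummer ``engine'' $F=G_i(X,Y)+\lambda_i Z^d$ already presupposes the linearity you need.

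If you can close this gap (for instance by arguing, via the prime--order action on each fibre and the structure of the singular fibres, that $G_{P_i}$ must respect the normalisation map and hence the hyperplane system), then the rest of your outline is plausible, though the monomiality step still needs care: you must also treat the intransitive finite subgroups of $\mathrm{PGL}_3$, not only the primitive ones, and the verification that the order--$5$ and order--$7$ elements in $A_6$ and $\mathrm{PSL}_2(7)$ are not homologies should be written out.
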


\begin{fact}[Fukasawa \cite{fukasawa3}] \label{unramified} 
If $g \ge 1$ and there exist three outer Galois points for $C$, then the morphism $X \rightarrow \mathbb{P}^2$ induced by the normalisation $X \rightarrow C$ is unramified. 
\end{fact}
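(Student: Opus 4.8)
The plan is to argue by contradiction. Suppose the morphism $\phi\colon X \to \mathbb{P}^2$ obtained from the normalisation is ramified at some point $Q \in X$; recall that $\phi$ is unramified at $Q$ exactly when it is an immersion there, i.e. $d\phi_Q$ is injective. Writing $R = \phi(Q) \in C$, ramification at $Q$ means that the branch of $C$ centred at $R$ and corresponding to $Q$ is singular, so its multiplicity
$$ \mu := \mathrm{ord}_Q\, \phi^*\ell \quad (\ell \text{ a general line through } R) $$
satisfies $\mu \ge 2$. The goal is to show that three outer Galois points cannot coexist with such a $Q$.

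First I would record the local dictionary between projections and the branch. For any $P \in \mathbb{P}^2 \setminus C$ the ramification index of the projection $\pi_P$ at $Q$ equals the order of vanishing at $Q$ of the line joining $P$ and $R$, that is, $e_Q(\pi_P) = \mathrm{ord}_Q\, \phi^*(\overline{PR})$. Choosing local coordinates so that $R$ is the origin and the tangent line $T$ to the branch at $Q$ is the $x$-axis, one parametrises the branch as $t \mapsto (x(t), y(t))$ with $\mathrm{ord}_t x = \mu < \nu := \mathrm{ord}_t y$. Inspecting $\mathrm{ord}_t(\alpha x(t) + \beta y(t))$ shows that the order of vanishing of the pullback of a line through $R$ takes only two values: it equals $\nu$ when the line is the tangent $T$, and equals $\mu$ for every other line through $R$. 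Hence, for each Galois point $P_i$,
$$ e_Q(\pi_{P_i}) = \begin{cases} \nu & \text{if } P_i \in T,\\ \mu & \text{if } P_i \notin T, \end{cases} $$
so $e_Q(\pi_{P_i}) \in \{\mu, \nu\}$, and both of these values are at least $2$ because $\nu > \mu \ge 2$.

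The decisive step combines this with the group theory supplied by the earlier facts. By Fact (b) we have $e_Q(\pi_{P_i}) = |G_{P_i}(Q)|$, so each stabiliser $G_{P_i}(Q)$ is a subgroup of the full point stabiliser $\mathrm{Aut}(X)_Q$ whose order is $\mu$ or $\nu$. Since $k$ has characteristic zero, $\mathrm{Aut}(X)_Q$ acts faithfully on the tangent space $T_Q X$ and thus embeds in $k^*$; in particular it is cyclic and contains a unique subgroup of each order. There are three points $P_1, P_2, P_3$ but only two possible values for $e_Q$, so by the pigeonhole principle two of them, say $P_a$ and $P_b$, satisfy $|G_{P_a}(Q)| = |G_{P_b}(Q)| =: m \ge 2$. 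Uniqueness of the order-$m$ subgroup of the cyclic group $\mathrm{Aut}(X)_Q$ then forces $G_{P_a}(Q) = G_{P_b}(Q)$, a nontrivial common subgroup of $G_{P_a}$ and $G_{P_b}$. This contradicts Fact (trivial intersection), so no ramification point $Q$ can exist and $\phi$ is unramified.

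I expect the main obstacle to be the local analysis of the second paragraph, namely verifying carefully that the order of vanishing of the pulled-back linear form is genuinely two-valued — equal to the branch multiplicity $\mu$ for every secant line and jumping to a strictly larger $\nu$ exactly for the tangent line $T$ — together with the clean identification $e_Q(\pi_P) = \mathrm{ord}_Q\, \phi^*(\overline{PR})$. This two-valuedness is precisely what makes the hypothesis of three Galois points (rather than two) effective: with only two points the pigeonhole step would collapse, one point possibly lying on $T$ and the other off it. The remaining ingredients are immediate from the quoted results: Fact (b) gives $e_Q = |G_{P_i}(Q)|$, characteristic zero gives cyclicity of the stabiliser, and Fact (trivial intersection) delivers the contradiction.
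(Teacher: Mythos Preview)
The paper does not actually prove this statement: it is quoted verbatim as a Fact from \cite{fukasawa3} and used as a black box, so there is no ``paper's own proof'' to compare against.

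That said, your argument is correct and self-contained. The local computation is sound: with the branch parametrised as $t\mapsto(x(t),y(t))$, $\mathrm{ord}_t x=\mu$, $\mathrm{ord}_t y=\nu>\mu$, the linear form $\alpha x+\beta y$ pulls back to order exactly $\mu$ whenever $\alpha\ne 0$ (no cancellation can occur in the $t^\mu$-term since $\nu>\mu$) and to order $\nu$ when $\alpha=0$, so the intersection number with a line through $R$ is genuinely two-valued. The identification $e_Q(\pi_P)=\mathrm{ord}_Q\phi^*(\overline{PR})$ is immediate once one takes $\overline{PR}$ as one of the two linear forms defining $\pi_P$. The finish via cyclicity of $\mathrm{Aut}(X)_Q$ in characteristic zero, the pigeonhole on three points versus two possible orders, and the trivial-intersection fact is clean. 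Your remark that the argument genuinely needs three Galois points (two would allow one on $T$ and one off) is exactly the point of the hypothesis.
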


\begin{fact}[Fukasawa \cite{fukasawa5}] \label{separated variables} 
Assume that points $P_1=(1:0:0)$, $P_2=(0:1:0)$ are outer Galois points, and that $|\langle G_{P_1}, G_{P_2} \rangle|<\infty$. 
Then there exist two polynomials $f_1(x), f_2(x) \in k[x]$ of degree $|\langle G_{P_1}, G_{P_2} \rangle|/d$ such that $C$ is an irreducible component of the curve $\tilde{C}$ defined by 
$$ f_1(x)-f_2(y)=0. $$ 
\end{fact}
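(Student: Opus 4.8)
The plan is to pass to the affine coordinates $x=X/Z$ and $y=Y/Z$. The pencil of lines through $P_1=(1:0:0)$ is $\{y=\mathrm{const}\}$, so $\pi_{P_1}$ coincides, up to the target coordinate, with the function $y\in k(C)$; likewise $\pi_{P_2}$ is the function $x$. Hence $k(C)/k(y)$ is Galois with group $G_{P_1}$ and $k(C)/k(x)$ is Galois with group $G_{P_2}$, and since each projection has degree $d$ we have $|G_{P_1}|=|G_{P_2}|=d$, with $k(y)=k(C)^{G_{P_1}}$ and $k(x)=k(C)^{G_{P_2}}$. Put $G:=\langle G_{P_1},G_{P_2}\rangle$ and $N:=|G|<\infty$. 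Because the fixed field of a group generated by two subgroups is the intersection of the two fixed fields,
$$ k(C)^{G}=k(C)^{G_{P_1}}\cap k(C)^{G_{P_2}}=k(y)\cap k(x). $$
As $k(C)^{G}\subset k(x)\cong k(t)$, L\"uroth's theorem gives $k(C)^{G}=k(u)$ for some $u$; thus $X/G\cong\mathbb{P}^1_u$, and $[k(x):k(u)]=[k(C):k(u)]/[k(C):k(x)]=N/d$, so $u=h(x)$ for a rational function $h$ of degree $N/d$, and symmetrically $u=\tilde h(y)$ with $\deg\tilde h=N/d$.

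It remains to choose $u$ so that $h$ and $\tilde h$ are \emph{polynomials}; this is the crux, and it is exactly where the position of $P_1,P_2$ on the line $\overline{P_1P_2}=\{Z=0\}$ is used. Let $S\subset X$ be the set of places lying over the points of $C$ at infinity, that is, over $C\cap\{Z=0\}$. Since $P_1,P_2\notin C$, every such point has the form $(X_0:Y_0:0)$ with $X_0\neq0$ and $Y_0\neq0$; consequently both $x$ and $y$ have a pole at each place of $S$, and conversely every pole of $x$ (resp.\ $y$) on $X$ lies in $S$. Thus $S$ is at once the fibre of $\pi_{P_2}$ over $x=\infty$ and the fibre of $\pi_{P_1}$ over $y=\infty$. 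Recalling that a Galois group acts transitively on each of its fibres, $S$ is a single $G_{P_2}$-orbit and a single $G_{P_1}$-orbit; in particular $S$ is invariant under both $G_{P_1}$ and $G_{P_2}$, hence under $G$.

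Now let $\rho:X\to X/G=\mathbb{P}^1_u$ be the quotient, whose fibres are precisely the $G$-orbits. Being a single $G_{P_2}$-orbit that is $G$-invariant, $S$ is a single $G$-orbit, hence equals one fibre $\rho^{-1}(u_0)$. Choose the coordinate $u$ so that $u_0=\infty$. Factoring $\rho$ as $X\xrightarrow{\pi_{P_2}}\mathbb{P}^1_x\xrightarrow{h}\mathbb{P}^1_u$ gives $h^{-1}(\infty)=\pi_{P_2}\bigl(\rho^{-1}(\infty)\bigr)=\pi_{P_2}(S)=\{x=\infty\}$; a rational function of $x$ whose only pole is at $x=\infty$ is a polynomial, so $u=f_1(x)$ with $f_1\in k[x]$, and symmetrically $u=f_2(y)$ with $f_2\in k[x]$, both of degree $N/d$. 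Finally $f_1(x)=u=f_2(y)$ as elements of $k(C)$, so $f_1(x)-f_2(y)$ vanishes identically on $C$; since $C$ is irreducible, it is an irreducible component of $\tilde C=\{f_1(x)-f_2(y)=0\}$. The one delicate point is the $G$-invariance of $S$: once that is in hand, the polynomiality of $f_1,f_2$ — and hence the whole statement — follows formally.
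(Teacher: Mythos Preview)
The paper does not prove this statement: it is recorded as a Fact and attributed to \cite{fukasawa5}, so there is no in-paper argument to compare against. That said, your proof is correct and is essentially the natural one. The identification $k(C)^{G}=k(x)\cap k(y)$, L\"uroth's theorem to get a generator $u$, and the degree count $[k(x):k(u)]=N/d$ are all standard. The substantive step, as you rightly flag, is showing that $u$ can be chosen to be a polynomial in each of $x$ and $y$; your observation that the set $S$ of places over $C\cap\{Z=0\}$ is simultaneously the fibre of $\pi_{P_1}$ over $y=\infty$ and of $\pi_{P_2}$ over $x=\infty$ (because $P_1,P_2\notin C$ forces both coordinates of any point at infinity to be nonzero), hence a single orbit under each $G_{P_i}$ and therefore a single $G$-orbit, is exactly what is needed. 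One small wording point: when you write ``Being a single $G_{P_2}$-orbit that is $G$-invariant, $S$ is a single $G$-orbit,'' it would be cleaner to spell out that $S$ is contained in a $G$-orbit (since any $G_{P_2}$-orbit is) and is also a union of $G$-orbits (since it is $G$-invariant), hence equals one $G$-orbit; the logic is there but compressed.
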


Since points of $\tilde{C} \cap \{Z=0\}$ are given by $X^{\deg \tilde{C}}+c Y^{\deg \tilde{C}}=0$ for some $c \in k \setminus \{0\}$, it follows that $\tilde{C} \cap \{Z=0\}$ consists of $\deg \tilde{C}$ points. 
This implies the following. 

\begin{corollary} \label{transversal} 
Let $P_1, P_2$ be different outer Galois points. 
Assume that $|\langle G_{P_1}, G_{P_2} \rangle| < \infty$.  
Then the set $C \cap \overline{P_1P_2}$ consists of $d$ points, namely, all points of $C \cap \overline{P_1P_2}$ are smooth points of $C$.  
\end{corollary}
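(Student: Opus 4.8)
The plan is to reduce to the normal form supplied by Fact \ref{separated variables} and then to compare the intersection of the line with $C$ against its intersection with the larger curve $\tilde{C}$. Since the assertion ``$C \cap \overline{P_1P_2}$ consists of $d$ points'' is invariant under projective transformations, I would first move $P_1, P_2$ to standard position, taking $P_1=(1:0:0)$ and $P_2=(0:1:0)$; then $\overline{P_1P_2}$ becomes the line $\{Z=0\}$. Because $|\langle G_{P_1}, G_{P_2}\rangle|<\infty$, Fact \ref{separated variables} applies and produces $f_1, f_2 \in k[x]$ of degree $m:=|\langle G_{P_1}, G_{P_2}\rangle|/d$ such that $C$ is an irreducible component of the curve $\tilde{C}$ defined by $f_1(x)-f_2(y)=0$, a plane curve of degree $m$.

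Next I would exploit the computation recorded immediately before the statement: homogenising $f_1(x)-f_2(y)$ and restricting to $Z=0$ retains only the leading terms, so $\tilde{C}\cap\{Z=0\}$ is cut out by $X^m+cY^m=0$ for some $c\neq 0$, which in characteristic zero has $m$ distinct roots. Hence $\tilde{C}$ meets $\{Z=0\}$ in exactly $m=\deg\tilde{C}$ distinct points. By B\'ezout's theorem the total intersection multiplicity of $\tilde{C}$ with the line is $m$, so each of these $m$ points is a transversal intersection of multiplicity one. Since a line meeting a plane curve with multiplicity one at a point forces the local multiplicity of the curve there to be one, every point of $\tilde{C}\cap\{Z=0\}$ is a smooth point of $\tilde{C}$.

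The key step, and the one needing the most care, is transferring this information from $\tilde{C}$ to its component $C$. At a smooth point $p$ of $\tilde{C}$ the curve is locally irreducible, so only one branch of $\tilde{C}$ passes through $p$; for $p\in C\cap\{Z=0\}$ that branch is the one coming from $C$, so $C$ and $\tilde{C}$ agree near $p$. Consequently $p$ is a smooth point of $C$, and the local intersection multiplicity of $C$ with $\{Z=0\}$ at $p$ equals that of $\tilde{C}$, namely one. Finally, $C$ is irreducible of degree $d\ge 3$, hence not a line, so $\{Z=0\}$ is not a component of $C$ and B\'ezout gives total intersection multiplicity $d$; as each intersection point contributes exactly one, $C\cap\overline{P_1P_2}=C\cap\{Z=0\}$ consists of precisely $d$ distinct points, all of which are smooth points of $C$. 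The only genuine subtlety is that $\tilde{C}$ may be reducible or non-reduced, which is why I would route the argument through the smoothness of $\tilde{C}$ at the points of $\{Z=0\}$ rather than attempting to compare defining equations of $C$ and $\tilde{C}$ directly.
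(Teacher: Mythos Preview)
Your argument is correct and follows exactly the route the paper takes: the paper records the computation that $\tilde{C}\cap\{Z=0\}$ is given by $X^{\deg\tilde{C}}+cY^{\deg\tilde{C}}=0$ and then states the corollary with only the words ``this implies the following,'' whereas you have carefully spelled out the passage from transversality of $\tilde{C}$ along $\{Z=0\}$ to transversality of its component $C$. No changes are needed.
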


\begin{proposition} \label{orders} 
Assume that points $P_1=(1:0:0)$, $P_2=(0:1:0)$, $P_3=(0:0:1)$ are outer Galois points. 
If $|\langle G_{P_1}, G_{P_2} \rangle|=d^2$, $|\langle G_{P_1}, G_{P_3} \rangle|=d^2$, and $|\langle G_{P_2}, G_{P_3} \rangle| < \infty$, then $C$ is projectively equivalent to the Fermat curve.  
\end{proposition}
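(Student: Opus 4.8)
The plan is to squeeze two different degree-$d$ equations for $C$ out of the equalities $|\langle G_{P_1},G_{P_2}\rangle|=d^2$ and $|\langle G_{P_1},G_{P_3}\rangle|=d^2$, to compare them in order to bring $C$ into the cyclic shape $X^d=B(Y,Z)$, and then to feed the remaining hypothesis into Corollary~\ref{transversal} to force $B$ to be separable, whence $C$ is smooth and Fact~\ref{smooth} finishes the argument. First I would apply Fact~\ref{separated variables} to the pair $P_1=(1:0:0)$, $P_2=(0:1:0)$. Since $|\langle G_{P_1},G_{P_2}\rangle|/d=d$, there are polynomials $f_1,f_2$ of degree $d$ with $C$ a component of $\{f_1(x)-f_2(y)=0\}$, where $x=X/Z$ and $y=Y/Z$; as $\deg C=d$ and $C$ is irreducible, the homogenisation $F_1(X,Z)-F_2(Y,Z)$, with $F_1=\sum_j a_jX^jZ^{d-j}$ and $F_2=\sum_j b_jY^jZ^{d-j}$, is, up to a scalar, the defining polynomial of $C$.

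Applying the same fact to $P_1,P_3=(0:0:1)$ --- after the coordinate swap $Y\leftrightarrow Z$ that moves $P_3$ to $(0:1:0)$ --- yields a second defining polynomial of the form $\widehat H_1(X,Y)-\widehat H_2(Z,Y)$, where $\widehat H_1$ involves only $X,Y$ and $\widehat H_2$ only $Z,Y$. Next I would compare the two defining polynomials, which must be proportional. A monomial $X^jZ^{d-j}$ with $0<j<d$ is divisible by both $X$ and $Z$, so it cannot occur in $\widehat H_1-\widehat H_2$; hence $a_j=0$ for $0<j<d$, i.e.\ $f_1(x)=a_dx^d+a_0$. Consequently $C$ is defined by $a_dX^d=B(Y,Z)$ with $B(Y,Z)=F_2(Y,Z)-a_0Z^d$ a binary form of degree $d$ (here $a_d\neq0$ and $\deg B=d$). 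Differentiating, a singular point must satisfy $d\,a_dX^{d-1}=0$, hence $X=0$; thus every singularity of $C$ lies on the line $\overline{P_2P_3}=\{X=0\}$ and corresponds to a repeated root of $B$.

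Finally the remaining hypothesis $|\langle G_{P_2},G_{P_3}\rangle|<\infty$ enters through Corollary~\ref{transversal}: the intersection $C\cap\overline{P_2P_3}=C\cap\{X=0\}$ consists of $d$ distinct smooth points. But on the line $\{X=0\}$ this intersection is exactly the zero set of $B(Y,Z)$, so $B$ has $d$ distinct roots and is therefore separable. By the previous paragraph $C$ then has no singular point, so $C$ is a smooth plane curve carrying the three outer Galois points $P_1,P_2,P_3$, and Fact~\ref{smooth} gives that $C$ is projectively equivalent to the Fermat curve.

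I expect the main obstacle to be resisting the temptation to conclude purity of $f_2$ directly: the Galois condition on each projection does \emph{not} by itself force the relevant one-variable polynomial to be a pure power (dihedral-type monodromy is a priori compatible with $|\langle G_{P_1},G_{P_2}\rangle|=d^2$), so one cannot reach the Fermat equation from a single pair. The real content is the interplay --- comparing the two separated-variable forms kills the mixed powers of $X$ and confines all singularities to $\{X=0\}$, and only then does the finiteness of $\langle G_{P_2},G_{P_3}\rangle$, via the transversality in Corollary~\ref{transversal}, supply the separability of $B$ that yields smoothness.
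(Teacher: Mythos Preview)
Your proposal is correct and follows essentially the same route as the paper: apply Fact~\ref{separated variables} to the pairs $(P_1,P_2)$ and $(P_1,P_3)$, compare the two separated-variable defining polynomials to force $f_1(x)=a_dx^d+a_0$, then invoke Corollary~\ref{transversal} and Fact~\ref{smooth}. Your monomial comparison in the homogeneous equation is slightly more direct than the paper's change of chart and comparison of degree-$d$ parts, and your observation that all singularities lie on $\{X=0\}$ is a touch sharper than the paper's $\{X=0\}\cup\{Z=0\}$, but the argument is the same.
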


\begin{proof}
Since $|\langle G_{P_1}, G_{P_2} \rangle|=d^2$, it follows from Fact \ref{separated variables} that there exist polynomials $f_{0, 1}(x), f_{0, 2}(x) \in k[x]$ of degree $d$ such that $C$ is defined by $f_{0,1}(x)-f_{0, 2}(y)=0$. 
Let $c_1=f_{0, 1}(0)$ and let $c_2=f_{0, 2}(0)$. 
Since $P_3 \in \mathbb{P}^2 \setminus C$, it follows that $c_1-c_2 \ne 0$. 
We take $f_1(x):=(f_{0, 1}(x)-c_1)/(c_1-c_2)$ and $f_2(x):=(-f_{0, 2}(x)+c_2)/(c_1-c_2)$. 
Then $C$ is defined by
$$ f(x, y):=f_1(x)+f_2(y)+1=0.  $$
Since $|\langle G_{P_1}, G_{P_3} \rangle|=d^2$, there exist polynomials $g_1(u), g_2(u) \in k[u]$ of degree $d$ with $g_1(0)=g_2(0)=0$ such that $C$ is defined by 
$$ g(u, v):=g_1(u)+g_2(v)+1=0, $$
where $u=X/Y=x/y$ and $v=Z/Y=1/y$. 
Then 
$$ y=\frac{1}{v}, \ x=\frac{u}{v}$$
and define 
$$ h(u, v):=v^d f(u/v, 1/v)=v^d f_1(u/v)+v^d f_2(1/v)+v^d. $$
The degree $d$ homogeneous parts of $g(u, v)$ and of $h(u, v)$ are  $c_1u^d+c_2 v^d$ for some $c_1, c_2 \in k \setminus \{0\}$ and $v^d f_1(u/v)+v^d$ respectively. 
Since $g(u, v)$ and $h(u, v)$ are the same up to a constant, it follows that polynomials 
$$c_1u^d+c_2 v^d \ \mbox{ and } \ v^d f_1(u/v)+v^d$$ 
are the same up to a constant. 
Therefore, polynomials $c_1x^d+c_2$ and $f_1(x)+1$ are the same up to a constant. 
By taking into account the definition of $f(x, y)$, $C$ is defined by a polynomial of the form 
$$ x^d+f_3(y), $$
where $f_3(y) \in k[y]$ is of degree $d$. 
This implies that all singular points of $C$ are contained in the union of two lines $\{X=0\}$ and $\{Z=0\}$. 
According to Corollary \ref{transversal}, $C$ is smooth. 
It follows from Fact \ref{smooth} that $C$ is projectively equivalent to the Fermat curve. 
\end{proof} 

Assume that points $P_1, P_2, P_3 \in \mathbb{P}^2 \setminus C$ are non-collinear outer Galois points. 
Fix a triple $i, j, k$ such that $\{i, j, k\}=\{1, 2, 3\}$. 
We consider the group 
$$ \langle G_{P_i}, G_{P_j} \rangle \cap G_{P_k}. $$
The order is denoted by $m_k$.   
The following proposition is proved by the same method as in \cite[Proof of Theorem 1.5]{fukasawa4}; however, the proof is given for the convenience of the readers. 

\begin{proposition} \label{extendable}
Let $P_1=(1:0:0)$, $P_2=(0:1:0)$, $P_3=(0:0:1)$. 
Assume that $P_1, P_2, P_3$ are Galois points, and that $|\langle G_{P_1}, G_{P_2} \rangle|< \infty$. 
Then the group $\langle G_{P_1}, G_{P_2} \rangle \cap G_{P_3}$ is a cyclic group, there exists an injective homomorphism 
$$ \langle G_{P_1}, G_{P_2} \rangle \cap G_{P_3} \hookrightarrow PGL(3, k), $$
and a generator $\sigma \in \langle G_{P_1}, G_{P_2} \rangle \cap G_{P_3}$ is represented by a matrix 
$$ A_{\sigma}=\left(\begin{array}{ccc} 
1 & 0 & 0 \\
0 & 1 & 0 \\ 
0 & 0 & \zeta 
\end{array}\right), $$
where $\zeta \in k$ is a primitive $m_3$-th root of unity.  
In particular, for a point $Q \in \mathbb{P}^2$ and an integer $i$ with $1 \le i \le m_3-1$, $\sigma^i(Q)=Q$ if and only if $Q \in \overline{P_1P_2} \cup \{P_3\}$. 
\end{proposition}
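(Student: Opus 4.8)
The plan is to reduce everything to the analysis of a single nontrivial element $\sigma \in H_3 := \langle G_{P_1}, G_{P_2}\rangle \cap G_{P_3}$, to show that such a $\sigma$ is induced by a diagonal matrix $\mathrm{diag}(1,1,\mu)$, and then to assemble these matrices into a finite subgroup of the multiplicative group $k \setminus \{0\}$. The cyclicity, the embedding into $PGL(3,k)$, the matrix form of a generator, and the fixed-locus statement all fall out of this diagonal description.

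First I would realise the finite group $H := \langle G_{P_1}, G_{P_2}\rangle$ inside $PGL(3,k)$ so that every element fixes both $P_1$ and $P_2$. Since $|H|<\infty$, Fact~\ref{separated variables} writes $C$ as a component of $f_1(x)-f_2(y)=0$, where $x=X/Z$, $y=Y/Z$. For $\tau \in G_{P_1}=\mathrm{Gal}(k(C)/k(y))$ one has $\tau(y)=y$ and $f_1(\tau(x))=\tau(f_2(y))=f_2(y)=f_1(x)$, so $\tau$ permutes the level sets of $f_1$. The decisive input (the method of \cite{fukasawa4}, resting on this separated-variable structure) is that such a $\tau$ is then induced by an affine transformation $x\mapsto ax+b$ of the $x$-line, i.e.\ by the projective map $(X:Y:Z)\mapsto(aX+bZ:Y:Z)$, and symmetrically every element of $G_{P_2}$ is induced by $(X:Y:Z)\mapsto(X:cY+eZ:Z)$. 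Each such matrix fixes $P_1=(1:0:0)$ and $P_2=(0:1:0)$, so the subgroup of $PGL(3,k)$ they generate, which is the image of $H$, fixes $P_1$ and $P_2$; the realisation is faithful because a projective transformation restricting to the identity on the nondegenerate curve $C$ is the identity.

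Granting this, take $1\ne\sigma\in H_3$ with extension $A_\sigma=(a_{ij})\in PGL(3,k)$. As $\sigma\in G_{P_3}$, the projection $\pi_{P_3}\colon(X:Y:Z)\mapsto(X:Y)$ satisfies $\pi_{P_3}\circ A_\sigma=\pi_{P_3}$ on $C$, so the quadratic form $(a_{11}X+a_{12}Y+a_{13}Z)Y-(a_{21}X+a_{22}Y+a_{23}Z)X$ vanishes on $C$; since $C$ is irreducible of degree $d\ge3$ it lies in no conic, hence this form is identically zero, which forces the first two rows of $A_\sigma$ to be $(1,0,0)$ and $(0,1,0)$ after scaling. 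Because $A_\sigma$ also fixes $P_1$ and $P_2$ by Step~1, one gets $a_{31}=a_{32}=0$, so $A_\sigma=\mathrm{diag}(1,1,\mu)$ for some $\mu\in k\setminus\{0\}$.

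The assignment $\sigma\mapsto A_\sigma$ is a homomorphism (the projective extension is unique on the spanning curve $C$) and embeds $H_3$ into $\{\mathrm{diag}(1,1,\mu):\mu\in k\setminus\{0\}\}\cong k\setminus\{0\}$; a finite subgroup of $k\setminus\{0\}$ is cyclic, which yields the cyclicity and the injective homomorphism $H_3\hookrightarrow PGL(3,k)$, and a generator corresponds to a primitive $m_3$-th root of unity $\zeta$, giving the asserted matrix $A_\sigma$. The final ``in particular'' is then immediate: $A_{\sigma^i}=\mathrm{diag}(1,1,\zeta^i)$ with $\zeta^i\ne1$ fixes exactly the points with $Z=0$ together with $(0:0:1)$, that is $\overline{P_1P_2}\cup\{P_3\}$. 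The main obstacle is Step~1, namely proving that the deck transformations of $\pi_{P_1}$ and $\pi_{P_2}$ are genuinely projective-linear and fix the opposite coordinate point (equivalently, that $f_1$ and $f_2$ induce Galois covers of $\mathbb{P}^1$); this is exactly where the separated-variable machinery of \cite{fukasawa4, fukasawa5} is needed, while everything after it is linear algebra.
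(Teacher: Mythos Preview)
Your strategy and the paper's diverge at the very first step. The paper never embeds the whole group $H=\langle G_{P_1},G_{P_2}\rangle$ into $PGL(3,k)$; it works only with an element $\sigma\in H_3=H\cap G_{P_3}$ and argues divisorially. Concretely, since $\overline{P_1P_2}$ passes through both $P_1$ and $P_2$, each $G_{P_i}$ ($i=1,2$) already preserves the set $C\cap\overline{P_1P_2}$ as a fibre of $\pi_{P_i}$; hence every element of $H$, in particular $\sigma$, satisfies $\sigma^*D_3=D_3$ with $D_3$ the reduced divisor on that line (Corollary~\ref{transversal}). On the other side, $\sigma\in G_{P_3}$ fixes the function $f$ with $k(f)=k(X)^{G_{P_3}}$ and preserves the pole divisor $D_2$ supported on $\overline{P_3P_1}$; choosing $g$ with $(g)=D_3-D_2$, one gets $\sigma^*g=a(\sigma)g$, so $\sigma$ acts diagonally on $\langle 1,f,g\rangle$, which is exactly the linear system giving the plane embedding. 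The map $\sigma\mapsto a(\sigma)$ is then the desired embedding into $k^{\times}$.

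Your route instead needs that every $\tau\in G_{P_1}$ acts by $x\mapsto ax+b$, and this is where the argument is thin. From $f_1(\tau(x))=f_1(x)$ in $k(C)$ you only know that $\tau(x)$ is a root of $f_1(T)-f_1(x)$; concluding $\tau(x)\in k[x]$ is equivalent to $k(x)/k(f_1(x))$ being Galois, i.e.\ to $f_1$ itself defining a Galois self-cover of $\mathbb P^1$. That is not part of the hypotheses and is not what Fact~\ref{separated variables} asserts. You flag this as ``the main obstacle'' and attribute it to \cite{fukasawa4,fukasawa5}, but the method of \cite{fukasawa4} the paper has in mind is precisely the divisor argument above, not a linearisation of all of $H$. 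So either you must supply an independent proof that $f_1,f_2$ are Galois (which is extra work and stronger than needed), or switch to the paper's argument, which sidesteps Step~1 entirely and uses only the trivial observation that $H$ preserves $C\cap\overline{P_1P_2}$. Everything from your Step~2 onward (the conic argument, cyclicity, fixed locus) is fine once a linear extension of $\sigma$ is in hand, but the paper obtains that extension much more cheaply.
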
 

\begin{proof} 
By Corollary \ref{transversal}, the set $C \cap \overline{P_1P_2}$ consists of different $d$ points $Q_1, \ldots, Q_d$. 
Let 
$$ D_3:=Q_1+\cdots+Q_d. $$ 
Let $\sigma \in \langle G_{P_1}, G_{P_2} \rangle \cap G_{P_3}$.  
Since $\sigma \in \langle G_{P_1}, G_{P_2} \rangle$, it follows that $\sigma(Q_i) \in \overline{P_1P_2}$ for $i=1, \ldots, d$, namely, 
$$ \sigma^*D_3=D_3. $$
Let $D_1$ and $D_2$ be divisors coming from $C \cap \overline{P_3P_2}$ and $C \cap \overline{P_3P_1}$ respectively. 
We take a function $f \in k(X)$ with $k(f)=k(X)^{G_{P_3}}$ such that 
$$ (f)=D_1-D_2. $$ 
Similarly, we can take a function $g \in k(X)^{G_{P_1}}$ such that 
$$ (g)=D_3-D_2. $$
Since $\overline{P_1P_2}$ does not pass through $P_3$, it follows that $g \not\in \langle 1, f \rangle \subset \mathcal{L}(D_2)$. 
It follows from the condition $\sigma^*D_3=D_3$ that $\sigma^*g=a(\sigma) g$ for some $a(\sigma) \in k$. 
Therefore, a linear space $\langle 1, f, g \rangle \subset \mathcal{L}(D_2)$ is invariant under the action of $\sigma$. 
Since the embedding $X \rightarrow C \subset \mathbb{P}^2$ is represented by $(1:f:g)$, there exists an injective homomorphism 
$$ \langle G_{P_1}, G_{P_2} \rangle \cap G_{P_3} \hookrightarrow PGL(3, k); \ \sigma \mapsto 
\left(\begin{array}{ccc}
1 & 0 & 0 \\
0 & 1 & 0 \\
0 & 0 & a(\sigma) 
\end{array}\right). 
$$ 
The map $\langle G_{P_1}, G_{P_2} \rangle \cap G_{P_3} \rightarrow k \setminus \{0\}; \sigma \mapsto a(\sigma)$ is an injective homomorphism. 
This implies that $\langle G_{P_1}, G_{P_2} \rangle \cap G_{P_3}$ is a cyclic group, and that $C$ is invariant under the linear transformation $(X:Y:Z) \mapsto (X:Y:\zeta Z)$. 
\end{proof}

\section{Proof of Theorem: The case $d \ne 4$} 
Hereafter, we assume that points $P_1, P_2, P_3 \in \mathbb{P}^2 \setminus C$ are non-collinear outer Galois points. 
According to Fact \ref{smooth}, we can assume that $C$ is a singular curve of degree $d \ge 3$, namely, $g < (d-1)(d-2)/2$.  
By Fact \ref{prime degree}, we can assume that $d$ is not a prime. 
In the proof below, we would like to prove that $d < 6$. 
The proof for the case where $d=4$ is carried out in the next section. 

For any $i, j$ with $i \ne j$, the group $\langle G_{P_i}, G_{P_j} \rangle$ acts on $C \cap \overline{P_iP_j}$. 
It follows form Fact \ref{trivial intersection} that $|\langle G_{P_i}, G_{P_j} \rangle| \ge d^2$. 
The orbit-stabiliser theorem implies that $|\langle G_{P_i}, G_{P_j} \rangle|<\infty$ (even if $g=1$), and that the order of the stabiliser subgroup $\langle G_{P_i}, G_{P_j} \rangle(Q)$ of a point $Q \in C \cap \overline{P_1P_2}$ is at least $d$. 
By Proposition \ref{orders}, if $|\langle G_{P_i}, G_{P_j}\rangle(Q)|= d$ for any pair $i, j \in \{1, 2, 3\}$ with $i \ne j$, then $C$ is projectively equivalent to the Fermat curve.  
Therefore, there exist $i, j$ such that $|\langle G_{P_i}, G_{P_j}\rangle(Q)| \ge d+1$. 
Assume that $g=1$. 
A well known theorem on the automorphism group of an elliptic curve implies that $d+1 \le 6$ (see, for example, \cite[III. Theorem 10.1]{silverman}). 
Therefore, we can assume that $g \ge 2$. 

Let   
$$ G:=\langle G_{P_1}, G_{P_2}, G_{P_3} \rangle. $$
There are two main steps in the proof; $m_k=1$ for some $k$, and $m_k \ge 2$ for any $k$. 
For each step, we take into consideration the various possibilities for the number of short orbits and the orders of their stabiliser subgroups. 

{\it Case 1: $m_k=1$ for some $k$.} 
In this case, $|G| \ge d^3$. 
We can assume that $k=3$, namely, $\langle G_{P_1}, G_{P_2} \rangle \cap G_{P_3}=\{1\}$. 
As we saw above, for a point $Q \in C \cap \overline{P_1P_2}$, $|G(Q)| \ge d$. 

Assume that the number of short orbits under $G$ is at least five. 
It follows from Fact \ref{Hurwitz} (c) that 
$$ d^3 \le |G| \le 4(g-1) \le 2d(d-3). $$
Then 
$$ d \le 2 \times \frac{d-3}{d} < 2 \times 1=2. $$
Assume that the number of short orbits is four. 
On the estimate of $T$ in Fact \ref{Hurwitz}, the assumption that three stabilisers have order $2$ and the fourth has order $d$, from $|G(Q)| \ge d$, gives 
$$ T \ge -2+\left(1-\frac{1}{2}\right) \times 3+\left(1-\frac{1}{d}\right)=\frac{1}{2}-\frac{1}{d}. $$
It follows from Fact \ref{Hurwitz} that 
$$ d^3 \le |G| = \frac{2g-2}{T} \le \frac{2d^2(d-3)}{d-2}. $$
Then 
$$ d \le 2 \times \frac{d-3}{d-2} < 2. $$

Assume that the number of short orbits is three. 
We take a smooth point $Q_k \in C \cap \overline{P_iP_j}$ for any $\{i, j, k\}=\{1, 2, 3\}$. 
We consider the case where three orbits $G \cdot Q_1$, $G \cdot Q_2$, $G \cdot Q_3$ are different. 
By Proposition \ref{orders}, two of $|G(Q_1)|$, $|G(Q_2)|$, $|G(Q_3)|$ are at least $d+1$. 
It follows from Fact \ref{Hurwitz} that 
$$ T \ge -2+\left(1-\frac{1}{d}\right)+\left(1-\frac{1}{d+1}\right) \times 2=\frac{d^2-2d-1}{d(d+1)}, $$
and 
$$ d^3 \le |G| = \frac{2g-2}{T} \le \frac{d^2(d+1)(d-3)}{d^2-2d-1}. $$
Then 
$$ d \le \frac{d^2-2d-3}{d^2-2d-1} <1.$$
We consider the case where there exist $i, j, k$ with $\{i, j, k\}=\{1, 2, 3\}$ such that $G \cdot Q_i=G \cdot Q_j$ and $G \cdot Q_i \ne G \cdot Q_k$. 
By Proposition \ref{orders}, $|G(Q_i)|=|G(Q_j)| \ge d+1$ and $|G(Q_k)| \ge d$.  
Then 
$$ T \ge -2+\left(1-\frac{1}{2}\right)+\left(1-\frac{1}{d}\right)+\left(1-\frac{1}{d+1}\right)=\frac{d^2-3d-2}{2d(d+1)}, $$
and 
$$ d^3 \le |G| = \frac{2g-2}{T} \le \frac{2d^2(d+1)(d-3)}{d^2-3d-2}. $$
It follows that if $d \ge 5$, then 
$$ d \le 2 \times \frac{d^2-2d-3}{d^2-3d-2} <4.$$

Finally, we consider the case where $G \cdot Q_1=G \cdot Q_2=G \cdot Q_3$. 
By Proposition \ref{orders}, $|G(Q_3)| \ge d+1$. 
Assume that the projection $\pi_{P_3}$ from $P_3$ is ramified at some point $Q \in C \cap \overline{P_1P_2}$. 
Since $\langle G_{P_1}, G_{P_2} \rangle \cap G_{P_3}=\{1\}$, it follows that $|G(Q)| \ge 2d$. 
It follows that 
$$ T \ge -2+\left(1-\frac{1}{2}\right)+\left(1-\frac{1}{3}\right)+\left(1-\frac{1}{2d}\right)=\frac{d-3}{6d}, $$
and 
$$ d^3 \le |G| = \frac{2g-2}{T} \le d(d-3) \times \frac{6d}{d-3}=6d^2. $$
This implies that $d \le 6$. 
If $d=6$, then $g=10$, namely, $C$ is smooth. 
This is a contradiction. 
It follows that $d < 6$. 
Therefore, we can assume that $\pi_{P_3}$ is not ramified at any point in $C \cap \overline{P_1P_2}$.
Then $G_{P_3} \cdot Q$ contains $d$ points for any point $Q \in C \cap \overline{P_1P_2}$.  
Note that $C \cap \overline{P_1P_2}$ consists of $d$ points, according to Corollary \ref{transversal}. 
Since the orbit $G \cdot Q$ contains the set $C \cap (\overline{P_2P_3} \cup \overline{P_3P_1})$, it follows that the orbit $G \cdot Q$ contains at least $d^2+2d$ points. 
The orbit-stabiliser theorem implies that $|G| \ge (d+1)(d^2+2d)$. 
Then    
$$ T \ge -2+\left(1-\frac{1}{2}\right)+\left(1-\frac{1}{3}\right)+\left(1-\frac{1}{d+1}\right)=\frac{1}{6}-\frac{1}{d+1}, $$
and 
$$ d(d+1)(d+2) \le |G| = \frac{2g-2}{T} \le \frac{6d(d+1)(d-3)}{d-5}. $$
It follows that 
$$ d+2 \le 6 \times \frac{d-3}{d-5}, $$
and that $d \le 8$. 
If $d=8$, then $g=21$, namely, $C$ is smooth. 
This is a contradiction. 
We consider the case where $d=6$. 
For a triple $i, j, k$ such that $|\langle G_{P_i}, G_{P_j} \rangle (Q_k)| \ge d+1=7$, we assume that $|\langle G_{P_i}, G_{P_j} \rangle (Q_k)|=7$. 
Then the cyclic group $\langle G_{P_i}, G_{P_j} \rangle (Q_k)$ of order seven acts on six points of $C \cap \overline{P_iP_j}$. 
This implies that $\langle G_{P_i}, G_{P_j} \rangle (Q_k)$ fixes all points of $C \cap \overline{P_iP_j}$. 
With Hurwitz's theorem applied to the covering $X \rightarrow X/(\langle G_{P_i}, G_{P_j}\rangle (Q_k))$, this is a contradiction. 
Therefore, $|\langle G_{P_i}, G_{P_j} \rangle (Q_k)|\ge 8$, namely, $|G(Q)| \ge 8$. 
It follows that the orbit $G \cdot Q$ contains at least $d^2+2d=48$ points. 
The orbit-stabiliser theorem implies that $|G| \ge 8 \times 48$. 
Then    
$$ T \ge -2+\left(1-\frac{1}{2}\right)+\left(1-\frac{1}{3}\right)+\left(1-\frac{1}{8}\right)=\frac{1}{24}, $$
and 
$$ 8 \times 48 \le |G| = \frac{2g-2}{T} \le (g-1) \times 48. $$
It follows that $g=9$ or $g=10$. 
If $g=10$, then $C$ is smooth. 
Therefore, $g=9$. 
Then $|G|=8 \times 48$, the length of the orbit $G \cdot Q$ is equal to $d^2+2d=48$, and $|\langle G_{P_i}, G_{P_j} \rangle(Q_k)|=8$. 
Note that the group $\langle G_{P_i}, G_{P_j} \rangle$ is of order $48$, and acts on the set $G \cdot Q \setminus (C \cap \overline{P_iP_j})$, which consists of $48-6=42$ points. 
Then the group $\langle G_{P_i}, G_{P_j} \rangle$ has three short orbits other than $C \cap \overline{P_iP_j}$. 
Note that for such short orbits, there exist at most one such that the order of stabiliser subgroup is two. 
It follows from Fact \ref{Hurwitz} that 
$$ T \ge -2+\left(1-\frac{1}{2}\right)+\left(1-\frac{1}{3}\right) \times 2+\left(1-\frac{1}{8}\right)=\frac{17}{24}, $$
and 
$$ 48= |\langle G_{P_i}, G_{P_j} \rangle| = \frac{2g-2}{T} \le 16 \times \frac{24}{17}. $$
This is a contradiction. 
We have $d < 6$.

{\it Case 2: $m_k \ge 2$ for any $k$.} 
According to Proposition \ref{extendable}, for $k=3$, there exists a generator $\sigma \in \langle G_{P_1}, G_{P_2} \rangle \cap G_3$ considered as a linear transformation 
$$ \tilde{\sigma}(X:Y:Z)=(X:Y:\zeta Z), $$
where $\zeta$ is a primitive $m_3$-th root of unity, and   
$C$ is defined by a polynomial of the form 
$$ Z^d+G_{m_3}(X, Y)Z^{d-m_3}+\cdots+G_{d-m_3}(X, Y)Z^{m_3}+G_d(X, Y), $$
where $G_i(X, Y) \in k[X, Y]$ is a homogeneous polynomial of degree $i$. 
Since $G_d(X, Y)$ has no multiple component by Corollary \ref{transversal},  it follows that there exists a point $Q \in C \cap \overline{P_1P_2}$ with $|G_{P_3}(Q)|=m_3$. 
If $m_3=d$, then $C$ is smooth. 
Therefore, $m_3 < d$. 
Since the set of all fixed points of $\tilde{\sigma}$ coincides with $\overline{P_1P_2} \cup \{P_3\}$, it follows that $\sigma$ acts on $\frac{d}{m_3}-1$ points, namely, 
$$ m_3 \ | \ \left(\frac{d}{m_3}-1\right). $$ 
On the other hand, $m_3 \le (d/m_3)-1$. 
Then $m_3^2+m_3 \le d$. 
In particular, 
$$ m_3 < \sqrt{d} \ \mbox{ and } \ \frac{d}{m_3}>\sqrt{d}. $$
The same discussion can be applied to $m_1, m_2$.

It follows from Proposition \ref{extendable} that 
$$ G_{P_k}(Q) \supset \langle G_{P_i}, G_{P_j} \rangle \cap G_{P_k} $$
for any point $Q \in C \cap \overline{P_iP_j}$.  
Let 
$$ e_k:=\max\{|G_{P_k}(Q)|; \ Q \in C \cap \overline{P_iP_j}\}, $$
and let 
$$ l_k=e_k/m_k. $$
Then, for a point $Q \in C \cap \overline{P_iP_j}$ with $|G_{P_k}(Q)|=e_k$,  
$$ |G(Q)| \ge |\langle G_{P_i}, G_{P_j} \rangle (Q)| \times l_k \ge d l_k, $$
and the length of an orbit $G \cdot Q$ is at least 
$$ d \times \frac{d}{m_k l_k}.  $$
The orbit-stabiliser theorem implies that 
$$ |G| \ge d l_k \times d \times \frac{d}{m_k l_k}=d^2 \times \frac{d}{m_k} \ge d^2 \sqrt{d}.  $$

Assume that the number of short orbits under $G$ is at least five. 
It follows from Fact \ref{Hurwitz} (c) that 
$$ d^2 \sqrt{d} \le |G| \le 4(g-1) \le 2d(d-3). $$
Then 
$$ \sqrt{d} \le 2 \times \frac{d-3}{d} < 2 \times 1=2. $$
This implies $d < 4$. 
Assume that the number of short orbits is four. 
It follows from Fact \ref{Hurwitz} that  
$$ T \ge -2+\left(1-\frac{1}{2}\right) \times 3+\left(1-\frac{1}{d}\right)=\frac{1}{2}-\frac{1}{d}, $$
and 
$$ d^2 \sqrt{d} \le |G| = \frac{2g-2}{T} \le \frac{2d^2(d-3)}{d-2}. $$
Then 
$$ \sqrt{d} \le 2 \times \frac{d-3}{d-2} < 2. $$
This implies $d < 4$. 

Assume that the number of short orbits is three. 
We take a smooth point $Q_k \in C \cap \overline{P_iP_j}$ for any $\{i, j, k\}=\{1, 2, 3\}$. 
We consider the case where three orbits $G \cdot Q_1$, $G \cdot Q_2$, $G \cdot Q_3$ are different. 
By Proposition \ref{orders}, two of $|G(Q_1)|$, $|G(Q_2)|$, $|G(Q_3)|$ are at least $d+1$. 
The orbit-stabiliser theorem implies that $|G| \ge (d+1)d\sqrt{d}$. 
It follows from Fact \ref{Hurwitz} that 
$$ T \ge -2+\left(1-\frac{1}{d}\right)+\left(1-\frac{1}{d+1}\right) \times 2=\frac{d^2-2d-1}{d(d+1)}, $$
and 
$$ (d+1)d\sqrt{d} \le |G| = \frac{2g-2}{T} \le \frac{d^2(d+1)(d-3)}{d^2-2d-1}. $$
Then 
$$ \sqrt{d} \le \frac{d^2-3d}{d^2-2d-1} <1.$$
This implies $d <1$. 
We consider the case where there exist $i, j, k$ with $\{i, j, k\}=\{1, 2, 3\}$ such that $G \cdot Q_i=G \cdot Q_j$ and $G \cdot Q_i \ne G \cdot Q_k$. 
By Proposition \ref{orders}, $|G(Q_i)|=|G(Q_j)| \ge d+1$ and $|G(Q_k)| \ge d$.  
The orbit-stabiliser theorem implies that $|G| \ge (d+1)d\sqrt{d}$.
Then 
$$ T \ge -2+\left(1-\frac{1}{2}\right)+\left(1-\frac{1}{d}\right)+\left(1-\frac{1}{d+1}\right)=\frac{d^2-3d-2}{2d(d+1)}, $$
and 
$$ (d+1)d\sqrt{d} \le |G| = \frac{2g-2}{T} \le \frac{2d^2(d+1)(d-3)}{d^2-3d-2}. $$
It follows that 
$$ \sqrt{d} \le \frac{2d^2-6d}{d^2-3d-2} \le \frac{9}{4}.$$
This implies that $d < 6$.

Finally, we consider the case where $G \cdot Q_1=G \cdot Q_2=G \cdot Q_3$. 
By Proposition \ref{orders}, $|G(Q_k)| \ge d+1$ for $k=1, 2, 3$. 
Let 
$$ e:= \max\{e_1, e_2, e_3\}. $$
We take $k \in \{1, 2, 3\}$ with $e_k=e$ and a point $Q \in C \cap \overline{P_iP_j}$ with $|G_{P_k}(Q)|=e_k$.  
Then $|G(Q)| \ge d l_k \ge 2d$ if $l_k \ge 2$, where $l_k=e_k/m_k$. 
It follows from Fact \ref{unramified} that if the tangent line of a point $R \in X$ contains $P_i$, then the tangent line does not contain $P_j$ for $j \ne i$.  
This implies that 
$$ \left(\bigcup_{R \in C \cap \overline{P_jP_k}} G_{P_i} \cdot R\right) \bigcap \left(\bigcup_{R \in C \cap \overline{P_iP_k}}G_{P_j} \cdot R\right) =\emptyset $$ 
if $i \ne j$. 
Assume that $l_k \ge 2$. 
The length of an orbit $G \cdot Q$ is at least 
$$  3 \times d \times \frac{d}{m_k l_k}. $$ 
The orbit-stabiliser theorem implies that
$$ |G| \ge d l_k \times 3d \times \frac{d}{m_k l_k} = 3d^2 \times \frac{d}{m_k} \ge 3d^2 \sqrt{d}. $$
It follows that 
$$ T \ge -2+\left(1-\frac{1}{2}\right)+\left(1-\frac{1}{3}\right)+\left(1-\frac{1}{d l_k}\right) \ge \frac{1}{6}-\frac{1}{2d}, $$
and 
$$ 3d^2\sqrt{d} \le |G| = \frac{2g-2}{T} \le d(d-3) \times \frac{6d}{d-3}=6d^2. $$
Then $\sqrt{d} \le 2$. 
This implies $d \le 4$. 
We consider the case where $l_k=1$.  
The length of an orbit $G \cdot Q$ is at least 
$$  3 \times d \times \frac{d}{m_k}. $$ 
The orbit-stabiliser theorem implies that
$$ |G| \ge (d+1) \times 3d \times \frac{d}{m_k} \ge 3d(d+1) \times \sqrt{d}. $$
It follows that 
$$ T \ge -2+\left(1-\frac{1}{2}\right)+\left(1-\frac{1}{3}\right)+\left(1-\frac{1}{d+1}\right) = \frac{1}{6}-\frac{1}{d+1}, $$
and 
$$ 3d(d+1)\sqrt{d} \le |G| = \frac{2g-2}{T} \le \frac{6d(d+1)(d-3)}{d-5}. $$
Then 
$$ \sqrt{d} \le 2 \times \frac{d-3}{d-5}, $$
and $d \le 9$. 
We recall that $m_k < d$, and the integer $m_k$ divides the integers $d$ and $(d/m_k)-1$. 
For the case $d=9$, $m_k=3$ divides $\frac{9}{3}-1=2$. 
This is a contradiction. 
For the case $d=8$, $m_k=2$ divides $\frac{8}{2}-1=3$. 
This is a contradiction. 
We consider the case where $d=6$. 
Then we have $m_k=2$. 
This implies that $|G(Q_k)|$ is even, namely, $|G(Q_k)| \ge 8$. 
The length of an orbit $G \cdot Q$ is at least 
$$  3 \times d \times \frac{d}{m_k}=54. $$ 
The orbit-stabiliser theorem implies that
$$ |G| \ge 8 \times 54.  $$
It follows that 
$$ T \ge -2+\left(1-\frac{1}{2}\right)+\left(1-\frac{1}{3}\right)+\left(1-\frac{1}{8}\right) =\frac{1}{24}, $$
and 
$$ 8 \times 54 \le |G| = \frac{2g-2}{T} \le (g-1) \times 48. $$
Then $g=10$, namely, $C$ is smooth. 
This is a contradiction. 
We have $d < 6$. 

\section{Proof of Theorem: The case $d=4$}
It follows from Fact \ref{smooth} that $g=1$ or $g=2$. 
Assume that $g=2$. 
Then there exists a unique singular point $Q \in C$ with multiplicity $2$. 
It follows from Corollary \ref{transversal} that any line containing $Q$ does not contain two outer Galois points. 
Since three outer Galois points $P_1, P_2, P_3$ exist, there exists $i$ such that $C \cap \overline{P_iQ} \setminus \{Q\}$ consists of exactly two points. 
Let $C \cap \overline{P_iQ}\setminus\{Q\}=\{R_1, R_2\}$. 
The two points over $Q$ for the normalisation $X \rightarrow C$ are denoted by $Q_1, Q_2$.  
Since the smooth model $X$ is hyperelliptic, the projection from $Q$ corresponds to the canonical linear system, namely, $R_1+R_2$ is a canonical divisor. 
Note that $R_i+Q_j$ is not a canonical divisor for $i=1, 2$ and $j=1, 2$. 
Therefore,  for any element $\sigma \in G_{P_i}$,  $\sigma(\{R_1, R_2\})=\{R_1, R_2\}$ or $\{Q_1, Q_2\}$. 
Since $|G_{P_i}|=4$, it follows that there exists $\sigma \in G_{P_i}$ with $\sigma(\{R_1, R_2\})=\{Q_1, Q_2\}$, namely, $Q_1+Q_2$ is a canonical divisor. 
This implies that a tangent line $T_Q$ of $C$ at $Q$ is uniquely determined, which corresponds to the effective divisor $D:=2Q_1+2Q_2$. 
According to Riemann--Roch's theorem, $\dim |D|=2$, namely, the linear system corresponding to a birational embedding into $\mathbb{P}^2$ is complete. 
Since any $\sigma \in G_{P_i}$ fixes the divisor induced by any line passing through $P_i$, any $\sigma \in G_{P_i}$ is the restriction of some linear transformation $\tilde{\sigma}$ of $\mathbb{P}^2$. 
Then $\tilde{\sigma}(T_Q)=T_Q$, namely, $G_{P_i}$ acts on the set $\{Q_1, Q_2\}$. 
As we saw above, there exists $\sigma \in G_{P_i}$ with $\sigma(\{R_1, R_2\})=\{Q_1, Q_2\}$.  
This is a contradiction. 

Assume that $g=1$. 
As we saw in the previous section, there exist $i, j$ such that $|\langle G_{P_i}, G_{P_j}\rangle(Q)| \ge d+1$ for a point $Q \in C \cap \overline{P_iP_j}$. 
A well known theorem on the automorphism group of an elliptic curve implies that $|\langle G_{P_i}, G_{P_j}\rangle(Q)| =6$ (see, for example, \cite[III. Theorem 10.1]{silverman}). 
If $G_{P_i}$ is a cyclic group of order $4$, then there exists a point $Q' \in X$ such that $|G_{P_i}(Q')|=4$. 
This is a contradiction, because there does not exist an elliptic curve admitting two cyclic coverings of degree $6$ and of degree $4$ with totally ramified points (see, for example, \cite[III. Theorem 10.1]{silverman}). 
Therefore, $G_{P_i} \cong G_{P_j} \cong (\mathbb{Z}/2\mathbb{Z})^{\oplus 2}$. 
Note that there exist two involutions $\sigma \in G_{P_i}$ with $X/\langle \sigma \rangle \cong \mathbb{P}^1$, since there exist eight ramification points for the covering $X \rightarrow X/G_{P_i}$.  
Since $G_{P_i}$, $G_{P_j}$ act on four points of $C \cap \overline{P_iP_j}$ transitively, there exist involutions $\sigma \in G_{P_i}$, $\tau \in G_{P_j}$ such that $X/\langle \sigma \rangle \cong \mathbb{P}^1$, $X/\langle \tau \rangle \cong \mathbb{P}^1$, and $\sigma|_{C \cap \overline{P_iP_j}}=\tau|_{C \cap \overline{P_iP_j}}$. 
We take four points $Q_1, Q_2, Q_3, Q_4 \in C \cap \overline{P_iP_j}$ so that $\sigma(Q_1)=Q_2$, $\sigma(Q_3)=Q_4$, $\tau(Q_1)=Q_2$ and $\tau(Q_3)=Q_4$. 
Then the double coverings $X \rightarrow X/\langle \sigma \rangle \cong \mathbb{P}^1$ and $X \rightarrow X/\langle \tau \rangle \cong \mathbb{P}^1$ are given by rational functions $f \in k(X)$ and $g \in k(X)$ such that 
$$ (f)=Q_1+Q_2-Q_3-Q_4 \ \mbox{ and } \ (g)=Q_1+Q_2-Q_3-Q_4 $$
respectively.  
Then $k(f)=k(g)$, namely, $\sigma=\tau$, and $G_{P_i} \cap G_{P_j} \supset \{\sigma\}$. 
This is a contradiction.     

\begin{center}
{\bf Acknowledgements}
\end{center} 
The author is grateful to Professor Takeshi Harui for helpful comments enabling the author to prove Theorem in the case $d=6$.

\end{document}